\tikzstyle{shaded}=[fill=red!10!blue!20!gray!30!white]
\tikzstyle{shaded line}=[double=red!10!blue!20!gray!30!white, double distance=1.5mm, draw=black]
\tikzstyle{unshaded}=[fill=white]
\tikzstyle{unshaded line}=[double=white, double distance=1.5mm, draw=black]
\tikzstyle{Tbox}=[circle, draw, thick, fill=white, opaque,]
\tikzstyle{empty box}=[circle, draw, thick, fill=white, opaque, inner sep=2mm]
\tikzstyle{background rectangle}= [fill=red!10!blue!20!gray!40!white,rounded corners=2mm] 
\tikzstyle{on}=[very thick, red!50!blue!50!black]
\tikzstyle{off}=[gray]
\tikzstyle{traces}=[scale=.2, inner sep=1mm]
\tikzstyle{quadratic}=[scale=.4, inner sep=1mm, baseline]
\tikzstyle{annular}=[scale=.7, inner sep=1mm, baseline]
\tikzstyle{make triple edge size}= [scale=.4, inner sep=1mm,baseline] 
\tikzstyle{icosahedron network}=[scale=.3, inner sep=1mm, baseline]
\tikzstyle{ATLsix}=[scale=.25, baseline]
\tikzstyle{TL12}=[scale=.15,baseline]
\tikzstyle{PAdefn}=[scale=.7,baseline]
\tikzstyle{TLEG}=[scale=.5,baseline]
\newtheorem{lemma}{Lemma}[section]
\newtheorem{definition}[lemma]{Definition}
\newtheorem{theorem}[lemma]{Theorem}
\newtheorem{remark}[lemma]{Remark}
\newtheorem{corollary}[lemma]{Corollary}
\newtheorem{question}[lemma]{Question}
\newenvironment{claim}[1]{\par\noindent\underline{Claim:}\space#1}{}
\newenvironment{claimproof}[1]{\par\noindent\underline{Proof:}\space#1}{\hfill $\blacksquare$}
\title[Dual Ore's theorem on distributive intervals]{Dual Ore's theorem on distributive intervals of finite groups}
\author[Sebastien Palcoux]{Sebastien Palcoux}
\address{Institute of Mathematical Sciences, Chennai, India}
\email{sebastienpalcoux@gmail.com}
\subjclass[2010]{20D30, 05E15, 20C15, 06C15}
\keywords{finite group; complex representation; subgroup lattice; distributive; Boolean}
\begin{document}

\begin{abstract}
This paper gives a self-contained group-theoretic proof of a dual version of a theorem of Ore on distributive intervals of finite groups. We deduce a bridge between combinatorics and representations in finite group theory.
\end{abstract}

\maketitle
\section{Introduction} 
\O ystein Ore proved in 1938 that a finite group is cyclic if and only if its subgroup lattice is distributive, and he extended one side as follows, where $[H,G]$ will be an interval in the subgroup lattice of the group $G$ (idem throughout the paper).

\begin{theorem}[\cite{or}] \label{oreintro}
Let $[H,G]$ be a distributive interval of finite groups. Then there is $g \in G$ such that $\langle  Hg \rangle = G$.
\end{theorem}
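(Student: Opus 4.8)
The plan is to recast the conclusion combinatorially and then reduce it to a counting problem on a Boolean interval. First I would note that for every $g\in G$ one has $\langle Hg\rangle=\langle H,g\rangle\in[H,G]$, and that $\langle H,g\rangle=G$ holds exactly when $g$ lies in no proper member of $[H,G]$, equivalently in none of the coatoms $M_1,\dots,M_n$ of $[H,G]$ (the maximal subgroups of $G$ containing $H$). Thus the theorem becomes the assertion that the coatoms do not cover $G$, i.e. $\bigcup_{i=1}^n M_i\neq G$. The case $H=G$ is trivial; when there is a unique coatom $M_1$, any $g\notin M_1$ works, since every proper subgroup in $[H,G]$ lies under $M_1$, so $\langle H,g\rangle\not\le M_1$ forces $\langle H,g\rangle=G$. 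Hence the content lies in the case $n\ge 2$.

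Next I would use distributivity to organise the intersections of the $M_i$. Put $D=\bigcap_{i=1}^n M_i$. A short lattice computation shows that $[D,G]$ is Boolean of rank $n$: each coatom $M_i$ has complement $\bigwedge_{j\neq i}M_j$ in $[D,G]$, because $M_i\vee\bigwedge_{j\neq i}M_j=\bigwedge_{j\neq i}(M_i\vee M_j)=G$ (distributivity, together with $M_i\vee M_j=G$ for distinct coatoms) while $M_i\wedge\bigwedge_{j\neq i}M_j=D$, and a complemented distributive lattice is Boolean. Consequently the $2^n$ subgroups $G_S:=\bigcap_{i\in S}M_i$ are pairwise distinct and indexed order-reversingly by subsets $S\subseteq\{1,\dots,n\}$, with $G_\emptyset=G$ and $G_{\{1,\dots,n\}}=D$. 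Inclusion--exclusion (equivalently Möbius inversion over $[H,G]$, whose Möbius function vanishes off Boolean subintervals) then rewrites the target inequality as the positivity of $\sum_{S}(-1)^{|S|}\,|G_S|=|G\setminus\bigcup_i M_i|$.

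I would prove positivity by induction on $n$, the engine being the classical fact that a finite group is never the union of two proper subgroups, which settles $n=2$ at once since $|G|>|M_1\cup M_2|=|M_1|+|M_2|-|D|$. For the clean inductive finish the aim is the index-multiplicativity forced by the Boolean structure, namely $[G:G_S]=\prod_{i\in S}[G:M_i]$; granting it and writing $a_i=[G:M_i]\ge 2$, the alternating sum factors as $|G|\prod_{i=1}^n\bigl(1-a_i^{-1}\bigr)>0$, completing the proof. (The sub-intervals $[D,M_k]\cong B_{n-1}$ and the complementary atoms $A_i=\bigcap_{j\neq i}M_j$ feed naturally into such an induction.)

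The main obstacle is exactly this multiplicativity, i.e. passing from the lattice identity $\langle M_i,M_j\rangle=G$ to the set-theoretic statement $M_iM_j=G$ (equivalently $[M_i:M_i\cap M_j]=[G:M_j]$): in general $\langle M_i,M_j\rangle=G$ does not force $M_iM_j=G$, and only the one-sided bound $[G:G_S]\le\prod_{i\in S}[G:M_i]$ is automatic. I would attack it by exploiting distributivity beyond the local rank-two picture, using the complement structure of $[D,G]$ together with a coset count that tracks how $M_2,\dots,M_n$ distribute among the $[G:M_1]$ cosets of $M_1$. Upgrading this bookkeeping from the automatic inequality to the exact product formula, thereby using distributivity rather than mere modularity, is where the essential work will lie.
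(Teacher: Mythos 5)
Your reduction of the theorem to the statement that the coatoms $M_1,\dots,M_n$ of $[H,G]$ do not cover $G$ is correct, as is the passage to the Boolean top interval $[D,G]$ (this is Lemma \ref{topBn} of the paper, though your derivation only exhibits complements for the coatoms, not for arbitrary elements of $[D,G]$). The genuine gap is the step you yourself flag: the index-multiplicativity $[G:\bigcap_{i\in S}M_i]=\prod_{i\in S}[G:M_i]$, equivalently the set-theoretic identity $M_iM_j=G$ rather than the lattice identity $M_i\vee M_j=G$. Your entire positivity argument --- the factorization $\sum_S(-1)^{|S|}|G_S|=|G|\prod_i(1-a_i^{-1})$ --- rests on it, you give no proof of it, and it is not a routine consequence of distributivity: joins in the subgroup lattice are generated subgroups, not set products, and a maximal subgroup need not be permutable. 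Some such input from distributivity is unavoidable, since without it the conclusion is simply false (the three proper subgroups of $C_2\times C_2$ do cover the group, and there $n=3$, $a_i=2$, so the naive product formula would give the absurd count $4\cdot(1/2)^3$). As written, the proposal therefore establishes only the cases $n=1$ and $n=2$ (the latter via the fact that a group is not the union of two proper subgroups), and defers the essential difficulty to future ``bookkeeping.''

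It is worth contrasting this with the paper's route, which never counts elements. There one proves by induction on the rank of the Boolean top interval that $[H,G]$ is $H$-cyclic: choosing a coatom $M$ with complement $M^{\complement}$, and elements $a,b$ with $\langle H,a\rangle=M$ and $\langle H,b\rangle=M^{\complement}$, the single element $g=ab$ satisfies $\langle H,g\rangle\vee M=\langle H,g\rangle\vee M^{\complement}=G$, whence $\langle H,g\rangle=\langle H,g\rangle\vee(M\wedge M^{\complement})=(\langle H,g\rangle\vee M)\wedge(\langle H,g\rangle\vee M^{\complement})=G$ by distributivity. This uses only the lattice operations and sidesteps the product-of-subgroups issue entirely; if you want to salvage the counting approach you would need to first establish that distinct coatoms of a distributive interval permute, which is a substantial theorem in its own right and not supplied by your sketch.
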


\noindent This paper first recalls our short proof of Theorem \ref{oreintro} and then gives a self-contained group-theoretic proof of the following dual version, where $G_{(V^H)}$ will be the pointwise stabilizer subgroup of $G$ for the fixed-point subspace $V^H$ (see Definition \ref{fixstab}).

\begin{theorem} \label{dualoreintro}  Let $[H,G]$ be a distributive interval of finite groups. Then there exists an irreducible complex representation $V$ of $G$ such that $G_{(V^H)}=H$.
\end{theorem}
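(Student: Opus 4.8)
The plan is to exploit the fixed-space/pointwise-stabilizer Galois connection $K\mapsto V^K$ and $U\mapsto G_{(U)}$ and to reduce the statement to a question about the atoms of the interval $[H,G]$. Since $H$ always fixes $V^H$ pointwise, we have $H\le G_{(V^H)}$ for every $V$, and $G_{(V^H)}$ is visibly the largest subgroup $K\in[H,G]$ with $V^K=V^H$; moreover every subgroup strictly above $H$ contains an atom of $[H,G]$. Hence $G_{(V^H)}=H$ holds precisely when $V^A\subsetneq V^H$ for every atom $A$ of $[H,G]$, and this is the form of the statement I would aim to establish.

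First I would record a reducible witness and then try to sharpen it. For the permutation module $W=\mathbb{C}[G/H]=\mathrm{Ind}_H^G\mathbf 1$ one checks directly, using the $H$-orbit decomposition of $G/H$ and the fixed singleton coset $eH$, that $G_{(W^K)}=K$ for every $K\in[H,G]$; in particular $G_{(W^H)}=H$. Writing $W=\bigoplus_i V_i^{\oplus m_i}$ this reads $\bigcap_i G_{(V_i^H)}=H$, so the whole difficulty is that of passing from a reducible witness to an irreducible one. By Frobenius reciprocity $\dim V^K=\langle\chi_V,\mathrm{Ind}_K^G\mathbf 1\rangle$, so, setting $\beta_A:=\chi_{\mathbb{C}[G/H]}-\chi_{\mathbb{C}[G/A]}$ (an honest character, since $\mathbb{C}[G/A]$ is a direct summand of $\mathbb{C}[G/H]$), the goal becomes the existence of an irreducible constituent common to all the characters $\beta_A$ as $A$ ranges over the atoms of $[H,G]$.

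I would then split according to the number of atoms. If $[H,G]$ has a unique atom $A_0$, the argument is immediate and barely uses distributivity: since $A_0\supsetneq H$ we have $\dim\mathbb{C}[G/A_0]<\dim\mathbb{C}[G/H]$, so some irreducible constituent $V$ of the complementary summand satisfies $\dim V^{A_0}<\dim V^H$; then $G_{(V^H)}$ is a subgroup containing $H$ but not the unique atom $A_0$, forcing $G_{(V^H)}=H$. For several atoms $A_1,\dots,A_n$ I would argue by induction on $[G:H]$. Applying the inductive hypothesis to each (distributive, strictly smaller) interval $[A_j,G]$ produces an irreducible $V_j$ with $G_{(V_j^H)}\in\{H,A_j\}$; one may assume each equals $A_j$ (otherwise we are done), so that $A_j$ fixes $V_j^H$ pointwise while every other atom moves some vector of $V_j^H$. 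The idea is then to combine the $V_j$ by tensor products and extract an irreducible constituent of $\bigotimes_j V_j$: the elementary fact that $v\otimes v'$ is moved by $a$ whenever $a$ fixes one tensor factor pointwise and moves the other lets a factor that is pointwise-fixed at $A_j$ be \emph{corrected} by a factor that moves at $A_j$, while the Boolean independence of the atoms supplied by distributivity keeps the atoms from interfering with one another.

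The hard part is exactly this last combination step. Tensoring several representations together, the action of a single atom $A_m$ on the tensor of the fixed spaces can be trivial even though $A_m$ acts nontrivially on each factor separately — precisely when the factors act by scalars whose product is $1$. Ruling out such cancellations is the technical heart of the proof, and it is here that distributivity of $[H,G]$ (equivalently, the fact that the atoms generate a Boolean sublattice, so distinct atoms are genuinely independent) must be used essentially, together with a careful choice of the $V_j$ and control of their central characters. The self-contained group-theoretic flavour of the argument, and its parallel with the short proof of Ore's Theorem \ref{oreintro}, suggests organizing this step as a Möbius/inclusion–exclusion computation over the Boolean lattice of atoms, mirroring on the representation side the counting of generating cosets on the subgroup side.
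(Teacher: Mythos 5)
Your reduction of the statement to ``$V^A\subsetneq V^H$ for every atom $A$ of $[H,G]$'' is correct, your observation that the permutation module $\mathbb{C}[G/H]$ is a reducible witness is correct, and your single-atom case is fine. But the proof stops exactly where the theorem's content begins: the step that combines the irreducibles $V_1,\dots,V_n$ (one per atom) into a single irreducible handling all atoms at once is not carried out, and the tool you propose for it --- tensor products --- is the wrong one. The ``elementary fact that $v\otimes v'$ is moved by $a$ whenever $a$ fixes one factor and moves the other'' is a statement about the full tensor product $V_1\otimes\cdots\otimes V_n$, not about its irreducible constituents; there is no control over $V^H$ (which may well be zero) or over $G_{(V^H)}$ for a constituent $V$, since $(V_1\otimes V_2)^H$ is in general strictly larger than $V_1^H\otimes V_2^H$ and the projection onto a constituent does not respect this decomposition. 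The scalar-cancellation issue you flag is real but is only one of several obstructions, and no mechanism is given for how distributivity would resolve any of them.

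The paper closes this gap by a different device: induction of representations from a \emph{coatom} rather than combination of data from the atoms. If $K$ is a coatom, the inductive hypothesis on the rank-$(n-1)$ Boolean interval $[H,K]$ gives an irreducible $U$ of $K$ with $K_{(U^H)}=H$; for $W=\mathrm{Ind}_K^G(U)$ one shows $G_{(W^H)}\subseteq K_{(U^H)}$, because an element fixing $U^H\subseteq W$ pointwise must satisfy $gK=K$ (Lemma \ref{cliff}). Hence $K\wedge G_{(V^H)}=H$ for every irreducible component $V$ of $W$, and distributivity then forces $G_{(V^H)}\in\{H,K^{\complement}\}$ via $K^{\complement}\vee(K\wedge G_{(V^H)})=(K^{\complement}\vee K)\wedge(K^{\complement}\vee G_{(V^H)})$. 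Finally, if every component had $G_{(V^H)}=K^{\complement}$, taking the meet over all components would give $G_{(W^H)}=K^{\complement}\neq H$, a contradiction. This is the controlled passage from a reducible witness to an irreducible one that your sketch is missing; a separate preliminary step (Lemma \ref{botlin}, again by induction from the join of the atoms) reduces the general distributive case to the Boolean one. If you want to salvage your atom-by-atom strategy, you would need an analogue of Lemma \ref{cliff} for whatever operation you use to combine the $V_j$, and tensor product does not admit one.
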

\noindent We deduce a bridge between combinatorics and representations:

\begin{corollary} \label{corointro}
The minimal number of irreducible components for a faithful complex representation of a finite group $G$ is at most the minimal length $\ell$ for an ordered chain of subgroups $$\{e\}=H_0 < H_1 < \dots < H_{\ell} = G$$ such that $[H_i,H_{i+1}]$ is distributive (or better, bottom Boolean). 
\end{corollary}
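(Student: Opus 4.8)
The plan is to turn the chain into an explicit faithful representation with $\ell$ irreducible summands, by applying Theorem~\ref{dualoreintro} to each interval of the chain and then transporting the resulting representations from the subgroups $H_{i+1}$ up to $G$. The engine is a kernel estimate: applied to the distributive interval $[H_i,H_{i+1}]$, Theorem~\ref{dualoreintro} produces an irreducible complex representation $V_i$ of $H_{i+1}$ with $(H_{i+1})_{(V_i^{H_i})}=H_i$. Since $V_i^{H_i}\subseteq V_i$, any element acting trivially on all of $V_i$ acts trivially on $V_i^{H_i}$, so $\ker V_i\subseteq (H_{i+1})_{(V_i^{H_i})}=H_i$; that is, the kernel of $V_i$ inside $H_{i+1}$ is contained in $H_i$.

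Because $V_i$ lives on the subgroup $H_{i+1}$ rather than on $G$, the next step is to induce. I would let $W_i$ be any irreducible constituent of $\mathrm{Ind}_{H_{i+1}}^{G} V_i$, obtaining $\ell$ irreducible representations $W_0,\dots,W_{\ell-1}$ of $G$. By Frobenius reciprocity $V_i$ is a constituent of the restriction $W_i|_{H_{i+1}}$, and since the complex group algebra is semisimple this constituent is a direct summand, forcing $\ker\!\big(W_i|_{H_{i+1}}\big)\subseteq \ker V_i$. Reading the left-hand side as an intersection inside $G$ yields the key containment $\ker W_i\cap H_{i+1}\subseteq H_i$ for every $i$.

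Finally I would run a telescoping downward induction. Let $g\in\bigcap_{i=0}^{\ell-1}\ker W_i$; I claim $g\in H_j$ for $j=\ell,\ell-1,\dots,0$. This holds for $j=\ell$ because $H_\ell=G$, and if $g\in H_{j+1}$ then $g\in\ker W_j\cap H_{j+1}\subseteq H_j$. Hence $g\in H_0=\{e\}$, so $\bigcap_i\ker W_i=\{e\}$, which means $\bigoplus_{i=0}^{\ell-1}W_i$ is a faithful representation of $G$ with at most $\ell$ irreducible components, establishing the bound. If Theorem~\ref{dualoreintro} is available under the weaker ``bottom Boolean'' hypothesis, the identical bookkeeping applies to such chains and yields the sharper version of the bound.

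The step I expect to be the main obstacle is the passage from $H_{i+1}$ to $G$. Theorem~\ref{dualoreintro} only speaks about representations of the top group of each \emph{interval}, whereas the corollary demands representations of $G$ itself, and $[H_i,G]$ need not be distributive; one therefore cannot apply the theorem directly on $G$. The resolution is to induce and pass to an arbitrary irreducible constituent, and the delicate point is that choosing such a constituent could a priori enlarge the kernel past $H_i$. The Frobenius reciprocity plus semisimplicity argument is precisely what controls this, guaranteeing $\ker W_i\cap H_{i+1}\subseteq H_i$ no matter which constituent is selected.
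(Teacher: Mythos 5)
Your proof is correct and follows essentially the same route as the paper's: apply the dual Ore theorem to each link of the chain, induce up to $G$, use Frobenius reciprocity to control an irreducible constituent, and telescope the intersection of kernels down the chain to $\{e\}$. The only cosmetic difference is that you track the containment $\ker W_i\cap H_{i+1}\subseteq H_i$, whereas the paper (via Lemma~\ref{cliff}(3)) keeps the exact equality $(H_{i+1})_{(W_i^{H_i})}=H_i$; both suffice for the telescoping step.
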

\noindent It is a non-trivial upper bound involving the subgroup lattice only. These results were first proved by the author as applications to finite group theory of results on planar algebras \cite[Corollaries 6.10, 6.11]{p}. For the convenience of the reader and for being self-contained, this paper reproduces some preliminaries of \cite{bp} and \cite{p}.

\section{Ore's theorem on distributive intervals}  
\subsection{Basics in lattice theory} \label{baslat}
We refer to \cite{sta} for the notions of \emph{finite lattice} $L$, \emph{meet} $\wedge$, \emph{join} $\vee$, \emph{subgroup lattice} $\mathcal{L}(G)$, \emph{sublattice} $L' \subseteq L$, \emph{interval} $ [a,b] \subseteq L$, \emph{minimum} $\hat{0}$, \emph{maximum} $\hat{1} $, \emph{atom}, \emph{coatom}, \emph{distributive lattice}, \emph{Boolean lattice} $\mathcal{B}_n$ (of rank $n$) and \emph{complement} $b^{\complement}$ (with $b \in \mathcal{B}_n$). The \emph{top interval} of a finite lattice $ L $ is the interval $ [t,\hat{1}] $, with $ t $ the meet of all the coatoms. The \emph{bottom interval} of a finite lattice $ L $ is the interval $ [\hat{0},b] $, with $ b $ the join of all the atoms. A lattice with a Boolean top interval will be called \emph{top Boolean}; idem for \emph{bottom Boolean}.

\begin{lemma} \label{topBn}
A finite distributive lattice is top and bottom Boolean.
\end{lemma}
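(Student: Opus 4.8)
The plan is to prove the bottom Boolean claim directly and then deduce the top Boolean claim by duality, since the order-dual of a finite distributive lattice is again distributive, the order-dual swaps the top and bottom intervals (coatoms become atoms and meet becomes join), and the class of finite Boolean lattices is self-dual. So establishing that $[\hat{0},b]$ is Boolean automatically yields that $[t,\hat{1}]$ is Boolean as well.

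For the bottom interval, I would write $a_1,\dots,a_k$ for the atoms of $L$ and set $b=a_1\vee\dots\vee a_k$, so that $[\hat{0},b]$ is the bottom interval. The first step is the decomposition that every $x\le b$ is the join of the atoms lying below it: by distributivity $x=x\wedge b=\bigvee_i (x\wedge a_i)$, and since each $a_i$ is an atom the term $x\wedge a_i$ is either $\hat{0}$ or $a_i$; hence $x=\bigvee_{a_i\le x}a_i$. In particular every element of $[\hat{0},b]$ is a join of atoms, so the map $\varphi\colon S\mapsto\bigvee_{i\in S}a_i$, from the subsets of $\{1,\dots,k\}$ to $[\hat{0},b]$, is surjective.

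Next I would prove the independence of the atoms: for $j\notin S$ one has $a_j\not\le\bigvee_{i\in S}a_i$. Indeed, distributivity gives $a_j\wedge\bigvee_{i\in S}a_i=\bigvee_{i\in S}(a_j\wedge a_i)$, and the meet of two distinct atoms is $\hat{0}$ (it lies below an atom, so equals $\hat{0}$ or the atom, and the latter would force the atoms to coincide); thus the right-hand side is $\hat{0}$, and $a_j\le\bigvee_{i\in S}a_i$ would give $a_j=\hat{0}$, a contradiction. Consequently the set of atoms below $\varphi(S)$ is exactly $\{a_i:i\in S\}$, which recovers $S$; this makes $\varphi$ injective, and together with monotonicity of $\varphi$ and of its inverse it is a lattice isomorphism onto $[\hat{0},b]$. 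Therefore $[\hat{0},b]\cong\mathcal{B}_k$ is Boolean.

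The argument is short, and the only real subtlety is that both the surjectivity step and the independence step rely essentially on distributivity: in a merely modular or general lattice neither the decomposition $x=\bigvee_{a_i\le x}a_i$ nor the independence of the atoms need hold. So the heart of the proof is the repeated use of the distributive law to reduce statements about joins of atoms to statements about individual atoms, where the atom property forces each meet to collapse to $\hat{0}$ or to the atom itself. I expect no genuine obstacle beyond invoking distributivity carefully at these two points and then packaging the duality cleanly.
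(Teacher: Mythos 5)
Your proof is correct, but it takes a genuinely different route from the paper: the paper disposes of this lemma by citing Stanley's treatment, which goes through Birkhoff's representation theorem (embedding the finite distributive lattice into some $\mathcal{B}_n$ and reading off the structure of the top and bottom intervals there). You instead give a direct, self-contained argument: the decomposition $x = x \wedge b = \bigvee_i (x \wedge a_i) = \bigvee_{a_i \le x} a_i$ shows every element of the bottom interval is a join of atoms, the computation $a_j \wedge \bigvee_{i \in S} a_i = \bigvee_{i \in S}(a_j \wedge a_i) = \hat{0}$ for $j \notin S$ gives independence, and together these make $S \mapsto \bigvee_{i \in S} a_i$ an order-isomorphism $\mathcal{B}_k \cong [\hat{0},b]$; the top interval then follows by order-duality, since the dual of a distributive lattice is distributive and Boolean lattices are self-dual. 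Both uses of distributivity (over finite joins, by induction from the binary law) are legitimate, and the degenerate case $x = \hat{0}$ is covered by the empty-join convention. What your approach buys is exactly the self-containedness the paper otherwise strives for: no appeal to Birkhoff's theorem or an external reference is needed, at the cost of a page of elementary lattice manipulation; the paper's citation buys brevity and places the lemma in the standard structural context of finite distributive lattices as lattices of order ideals.
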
 
\begin{proof}
See \cite[items a-i p254-255]{sta} which uses Birkhoff's representation theorem (a finite lattice is distributive if and only if it embeds into some $ \mathcal{B}_n $).
\end{proof} 
\subsection{The proof} \hspace*{1cm} \\
\noindent {\O}ystein Ore proved the following result in \cite[Theorem 4, p267]{or}.
\begin{theorem} \label{ore1}
 
A finite group $ G $ is cyclic if and only if its subgroup lattice $ \mathcal{L}(G) $ is distributive. 
\end{theorem}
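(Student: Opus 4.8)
The plan is to prove the two implications separately, using throughout that distributivity is inherited by sublattices and by intervals, hence---via the correspondence theorem, which identifies $\mathcal{L}(H)$ with the interval $[\{e\},H]$ and $\mathcal{L}(G/N)$ with the interval $[N,G]$---by every \emph{section} of $G$.

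For the forward implication, if $G=\langle g\rangle$ is cyclic of order $n$, then $d\mapsto\langle g^{n/d}\rangle$ is a bijection from the divisors of $n$ onto the subgroups of $G$, and it is a lattice isomorphism from the divisor lattice $(D_n,\mid)$ onto $\mathcal{L}(G)$, with meet $=\gcd$ and join $=\operatorname{lcm}$. Writing $n=\prod_i p_i^{a_i}$ identifies $D_n$ with the product of chains $\prod_i\{0,1,\dots,a_i\}$. A chain is distributive and a finite product of distributive lattices is distributive, so $\mathcal{L}(G)$ is distributive.

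For the converse I will prove the contrapositive: if $G$ is non-cyclic, then some section of $G$ has subgroup lattice containing the diamond $M_3$ (three pairwise incomparable elements sharing a common bottom and a common top) as a sublattice; since $M_3$ is not distributive and distributivity is hereditary as above, this forces $\mathcal{L}(G)$ to be non-distributive. I split into two cases. If some Sylow $p$-subgroup $P$ is non-cyclic, then $P/\Phi(P)$ is elementary abelian of rank $d\ge 2$, and the correspondence theorem gives $[\Phi(P),P]\cong\mathcal{L}(P/\Phi(P))$, the subgroup lattice of $C_p^{\,d}$, i.e.\ the subspace lattice of $\mathbb{F}_p^{\,d}$; three distinct lines inside a $2$-dimensional subspace then exhibit $M_3$. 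If instead every Sylow subgroup is cyclic, then $G$ cannot be abelian (an abelian group with cyclic Sylow subgroups is cyclic), so its defining action is nontrivial; I will pass to a section isomorphic to a Frobenius group $C_p\rtimes C_{q^b}$ with $p\ne q$ and faithful action, in which the normal subgroup $C_p$ together with any two distinct Sylow $q$-subgroups $Q_1,Q_2$ satisfy $C_p\wedge Q_i=Q_1\wedge Q_2=\{e\}$ and $C_p\vee Q_i=Q_1\vee Q_2=G$, hence form an $M_3$.

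The main obstacle is this last step: extracting the Frobenius section $C_p\rtimes C_{q^b}$ from an arbitrary non-abelian group all of whose Sylow subgroups are cyclic. I will handle it using the structure of such groups (they are metacyclic, of the form $C_m\rtimes C_n$ with $\gcd(m,n)=1$): starting from a prime $q\mid n$ whose Sylow subgroup acts nontrivially, I quotient the normal cyclic part down to a prime-order subgroup $C_p$ on which $C_{q^b}$ still acts nontrivially, and then quotient out the kernel of this action to make it faithful. Faithfulness of a nontrivial action on $C_p$ is automatically fixed-point-free, which is exactly what makes the resulting section Frobenius and forces distinct Sylow $q$-subgroups to meet trivially. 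Assembling these reductions with the two cases above completes the contrapositive, and hence the theorem.
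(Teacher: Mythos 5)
Your argument is essentially correct, but note that the paper offers no proof of Theorem \ref{ore1} at all: it simply cites Ore's original article, so the comparison is with the classical argument rather than with anything in the text. Your forward direction (the subgroup lattice of $C_n$ is the divisor lattice of $n$, a product of chains) is standard. For the converse you argue by contrapositive, exhibiting the diamond $M_3$ as a sublattice of some section of $G$: inside $[\Phi(P),P]$ via three lines of $\mathbb{F}_p^{\,2}$ when some Sylow subgroup $P$ is non-cyclic, and inside a Frobenius section $C_p\rtimes C_{q^c}$ when all Sylow subgroups are cyclic. This works, and the details you leave implicit do go through (the kernel of $\mathrm{Aut}(C_{p^a})\to\mathrm{Aut}(C_p)$ is a $p$-group, so a nontrivial action of a $q$-group with $q\neq p$ survives the passage to the top quotient $C_p$; and $Q$ cannot be normal in $C_p\rtimes Q$ when the action is nontrivial, so two distinct Sylow $q$-subgroups exist and intersect trivially by fixed-point-freeness). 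The cost is that your second case invokes the H\"older--Burnside--Zassenhaus classification of groups with all Sylow subgroups cyclic, which rests on Burnside's normal $p$-complement theorem and is considerably heavier than the statement being proved. The classical route is more elementary and, amusingly, is already latent in this paper: from distributivity one gets $\langle ab\rangle \vee (\langle a\rangle\wedge\langle b\rangle) = (\langle ab\rangle\vee\langle a\rangle)\wedge(\langle ab\rangle\vee\langle b\rangle)=\langle a,b\rangle$ --- the very computation carried out in the second claim of the proof of Theorem \ref{ore2} --- so $\langle a,b\rangle$ is generated by $ab$ together with the central cyclic subgroup $\langle a\rangle\cap\langle b\rangle$, hence is abelian and two-generated, and one concludes it is cyclic because $\mathcal{L}(C_p\times C_p)$ contains $M_3$ (your Case 1 in miniature). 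So your proof is valid but trades self-containedness for structure theory; if you want an elementary proof, adapt the distributivity identity the paper already uses.
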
 
\noindent Theorem \ref{oreintro} is an extension by Ore of one side of Theorem \ref{ore1} to any distributive interval of finite groups \cite[Theorem 7, p269]{or}.
 \begin{definition} \label{Hcy}
An interval of finite groups $ [H,G] $ is said to be $ H $-cyclic if there is $ g \in G $ such that $ \langle  H,g \rangle = G $. Note that $ \langle  H,g \rangle = \langle  Hg \rangle $. 
\end{definition}
\noindent We will give our short alternative proof of Theorem \ref{oreintro} by extending it to any top Boolean interval (see Lemma \ref{topBn}) as follows:
\begin{theorem} \label{ore2}
A top Boolean interval $ [H,G] $ is $H$-cyclic.
\end{theorem}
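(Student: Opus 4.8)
The plan is to reformulate $H$-cyclicity as a covering condition and then induct on the rank of the Boolean top interval. Writing $M_1,\dots,M_n$ for the coatoms of $[H,G]$ (the maximal subgroups of $G$ containing $H$) and $t=M_1\wedge\cdots\wedge M_n$ for their meet, the hypothesis is that $[t,G]\cong\mathcal{B}_n$. Since every proper subgroup of $G$ containing $H$ is contained in some coatom, one has $\langle H,g\rangle=G$ if and only if $g\notin M_i$ for every $i$; in particular, being $H$-cyclic depends only on the coatoms. I would exploit this to replace $H$ by $t$ and prove the equivalent statement that a Boolean interval $[t,G]\cong\mathcal{B}_n$ is $t$-cyclic, i.e. that $G\neq\bigcup_{i=1}^n M_i$. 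The argument then runs by induction on $n$, the case $n=0$ (where $G=t$) being trivial.

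For the inductive step I would use the complementation available in $\mathcal{B}_n$. Identifying $[t,G]$ with the lattice of subsets of $\{1,\dots,n\}$, let $A_1$ be the atom complementary to the coatom $M_1$; concretely $A_1\leq M_j$ for every $j\neq 1$, while $A_1\not\leq M_1$. The upper interval $[A_1,G]$ consists of exactly the subgroups corresponding to subsets containing $1$, so it is again a Boolean interval of groups, now of rank $n-1$, whose coatoms are $M_2,\dots,M_n$. By the induction hypothesis $[A_1,G]$ is $A_1$-cyclic, so there is $g_0\in G$ with $\langle A_1,g_0\rangle=G$, equivalently $g_0\notin M_j$ for all $j\geq 2$.

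The final step promotes $g_0$ to an element avoiding every coatom by translating within the coset $A_1g_0$. Because $A_1\leq M_j$ for $j\geq 2$, no element of $A_1 g_0$ can lie in such an $M_j$: if $a\in A_1$ and $ag_0\in M_j$ then $g_0=a^{-1}(ag_0)\in M_j$, a contradiction. On the other hand $A_1\not\leq M_1$ forces $A_1g_0\not\subseteq M_1$ (otherwise $g_0\in M_1$ and then $A_1\subseteq M_1$), so some $a\in A_1$ yields $g:=ag_0\notin M_1$. This single $g$ avoids all of $M_1,\dots,M_n$, whence $\langle H,g\rangle=G$. The point I expect to require the most care is the inductive setup rather than any computation: one must check that the reduction from $H$ to $t$ genuinely preserves $H$-cyclicity, and that $[A_1,G]$ really is a Boolean interval of the \emph{subgroup} lattice so that the induction hypothesis applies. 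The whole argument rests on the Boolean feature that each atom is the complement of a coatom; distributivity alone, without this complementation, would not supply the element $g$.
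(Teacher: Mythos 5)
Your proof is correct, and while its outer scaffolding matches the paper's --- both reduce to the top interval via the observation that $\langle H,g\rangle=G$ exactly when $g$ lies in no coatom of $[H,G]$, and both induct on the rank of the Boolean lattice using the complement of a coatom --- the inductive step itself is genuinely different. The paper applies the induction hypothesis \emph{downward}, to the two lower intervals $[H,M]$ and $[H,M^{\complement}]$, obtains $a,b$ with $\langle H,a\rangle=M$ and $\langle H,b\rangle=M^{\complement}$, and verifies that the single product $g=ab$ generates by means of the distributive identity $\langle H,g\rangle\vee(M\wedge M^{\complement})=(\langle H,g\rangle\vee M)\wedge(\langle H,g\rangle\vee M^{\complement})$. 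You instead apply it \emph{upward}, to the single interval $[A_1,G]$ above the atom $A_1=M_1^{\complement}$, recast $H$-cyclicity as the covering statement $G\neq\bigcup_i M_i$, and finish with a coset argument: $A_1g_0$ is disjoint from each $M_j$ with $j\geq 2$ because $A_1\leq M_j$, and is not contained in $M_1$ because $A_1\not\leq M_1$. The points you flag as delicate do check out: $H\leq t$ and every maximal subgroup of $G$ containing $t$ already contains $H$, so the coatoms of $[t,G]$ are exactly $M_1,\dots,M_n$ and the reduction preserves $H$-cyclicity; and $[A_1,G]$ is an interval of the Boolean lattice $[t,G]$, hence Boolean of rank $n-1$ with coatoms exactly $M_2,\dots,M_n$, so the induction hypothesis applies. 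The trade-off: the paper's route yields an explicit recursive product formula for the generator, while yours eliminates the lattice-identity computation in favour of an elementary coset-avoidance argument that makes the role of complementation (an atom lying below all coatoms but one) especially transparent.
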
 
\begin{proof}
The proof follows from the claims below.
\\
\begin{claim} \label{max} Let $ M $ be a maximal subgroup of $ G $. Then $ [M,G] $ is $ M $-cyclic.
\end{claim}
\begin{claimproof}
For $ g \in G $ with $ g \not \in M $, we have $ \langle  M,g \rangle = G $ by maximality.
\end{claimproof}
\\
\begin{claim} \label{preore2}
A Boolean interval $ [H,G] $ is $ H $-cyclic.
\end{claim} 
\begin{claimproof}
Let $ M $ be a coatom in $ [H,G] $, and $ M^{\complement} $ be its complement. By the previous claim and induction on the rank of the Boolean lattice, we can assume $ [H,M] $ and $ [H,M^{\complement}] $ both to be $ H $-cyclic, i.e. there are $ a, b \in G $ such that $ \langle  H,a \rangle = M $ and $ \langle  H,b \rangle = M^{\complement} $. For $ g=a b $, $ a=g b^{-1} $ and $ b=a^{-1}g $, so $ \langle  H,a,g \rangle = \langle  H,g,b \rangle = \langle  H,a,b \rangle = M \vee M^{\complement} = G $. 
Now, $ \langle  H,g \rangle = \langle  H,g \rangle \vee H = \langle  H,g \rangle \vee (M \wedge M^{\complement}) $ but by distributivity $ \langle  H,g \rangle \vee (M \wedge M^{\complement}) = (\langle  H,g \rangle \vee M \rangle) \wedge (\langle  H,g \rangle \vee M^{\complement} \rangle) $. So $ \langle  H,g \rangle = \langle  H,a,g \rangle \wedge \langle  H,g,b \rangle = G $. The result follows.
\end{claimproof} 
\\
\begin{claim} \label{topred} 
 $ [H , G] $ is $ H $-cyclic if its top interval $ [K,G] $ is $K$-cyclic.
\end{claim}
\begin{claimproof}
Consider $ g \in G $ with $ \langle  K,g \rangle = G $. For any coatom $ M \in [H,G] $, we have $ K \subseteq M $ by definition, and so $ g \not \in M $, then a fortiori $ \langle  H,g \rangle \not \subseteq M $. It follows that $ \langle  H,g \rangle=G $.
\end{claimproof} 
\end{proof}
\noindent The converse is false because $ \langle  S_2, (1234) \rangle = S_4 $ whereas $ [S_2,S_4] $ is not top Boolean.

\section{Dual Ore's theorem on distributive intervals}
\subsection{Basics in Galois connections}
\begin{definition} \label{fixstab} Let $W$ be a representation of a group $G$, $K$ a subgroup of $G$, and $X$ a subspace of $W$. We define the \emph{fixed-point subspace} $$W^{K}:=\{w \in W \ \vert \  kw=w \ , \forall k \in K  \},$$ and the \emph{pointwise stabilizer subgroup} $$G_{(X)}:=\{ g \in G \  \vert \ gx=x \ , \forall x \in X \}.$$  \end{definition} 

\begin{lemma} \label{tech} Let $G$ be a finite group, $H,K$ two subgroups, $V$ a representation of $G$ and $X,Y$ two subspaces of $V$. Then
\begin{itemize}
\item[(1)] $H \subseteq K \Rightarrow V^{K} \subseteq V^{H}$,
\item[(2)] $X \subseteq Y \Rightarrow G_{(Y)} \subseteq G_{(X)}$,
\item[(3)] $V^{H \vee K} = V^H \cap V^K $,
\item[(4)] $H \subseteq G_{(V^H)}$,
\item[(5)] $ V^{G_{(V^H)}} = V^H$,
\item[(6)] $[H \subseteq K$ and $V^{K} \subsetneq V^{H}]$ $\Rightarrow K \not \subseteq G_{(V^H)}$.
\end{itemize}
\end{lemma}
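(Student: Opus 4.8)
The plan is to treat the two maps $H \mapsto V^H$ and $X \mapsto G_{(X)}$ as an antitone Galois connection between the subgroup lattice $\mathcal{L}(G)$ and the lattice of subspaces of $V$, and to derive every item from Definition \ref{fixstab} together with one structural fact: for a fixed vector $w \in V$, the stabilizer $\{g \in G \mid gw = w\}$ is a subgroup of $G$. Items (1), (2) and (4) are immediate unwindings of the definitions. For (1), any $w$ fixed by all of $K$ is in particular fixed by all of $H \subseteq K$; for (2), any $g$ fixing all of $Y$ fixes all of $X \subseteq Y$; for (4), every $h \in H$ fixes every $w \in V^H$ by the very definition of $V^H$, so $h \in G_{(V^H)}$.

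For (3) I would prove the two inclusions separately. The inclusion $V^{H \vee K} \subseteq V^H \cap V^K$ follows from (1) applied to $H \subseteq H \vee K$ and to $K \subseteq H \vee K$. For the reverse inclusion, take $w \in V^H \cap V^K$; then the stabilizer of $w$ is a subgroup containing both $H$ and $K$, hence it contains the subgroup $H \vee K = \langle H, K\rangle$ that they generate, which says exactly that $w \in V^{H \vee K}$. This is the only place where the group structure, as opposed to merely order-theoretic bookkeeping, is genuinely used.

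For (5) I would combine (4) with its dual. The dual statement $X \subseteq V^{G_{(X)}}$ holds for the same reason as (4): each $x \in X$ is fixed by every element of $G_{(X)}$. Feeding in $X = V^H$ gives $V^H \subseteq V^{G_{(V^H)}}$, while applying (1) to the inclusion $H \subseteq G_{(V^H)}$ from (4) gives $V^{G_{(V^H)}} \subseteq V^H$; together these yield the claimed equality. Finally, (6) is a short contradiction argument: assuming $K \subseteq G_{(V^H)}$, item (1) gives $V^{G_{(V^H)}} \subseteq V^K$, which by (5) reads $V^H \subseteq V^K$, whereas $H \subseteq K$ and (1) give $V^K \subseteq V^H$; hence $V^H = V^K$, contradicting the hypothesis $V^K \subsetneq V^H$.

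I expect no serious obstacle, since the whole lemma is just the formal package of closure identities attached to this Galois connection. The only step requiring a moment's thought is the reverse inclusion in (3), where one must notice that a vector stabilizer is closed under the group operations and therefore absorbs the generated subgroup $\langle H, K\rangle$; everything else is a one-line consequence of the definitions or of the already-proved items.
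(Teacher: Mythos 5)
Your proposal is correct and follows essentially the same route as the paper: items (1), (2), (4) by unwinding definitions, (5) and (6) as formal consequences of the earlier items, and the reverse inclusion in (3) via the fact that a vector fixed by $H$ and $K$ is fixed by all of $H \vee K$ (the paper writes an element of $H\vee K$ as a word $h_1k_1\cdots h_rk_r$, while you invoke that the point stabilizer is a subgroup containing $H$ and $K$ --- the same argument in different clothing).
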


\begin{proof} (1) and (2) are immediate. 

\noindent (3) First $H,K \subseteq H \vee K$, so $V^{H \vee K} $ is included in $ V^H$ and  $V^{K}$, so in $V^H \cap V^{K}$. Now take $v \in V^H \cap V^{K}$, then $\forall h \in H$ and $\forall k \in K$, $hv = kv = v$, but any element $g \in H \vee K$ is of the form $h_1k_1h_2k_2 \cdots h_rk_r $ with $h_i \in H$ and $k_i \in K$, it follows that $gv=v$ and so $V^H \cap V^{K} \subseteq V^{H \vee K}$. (4) Take $h \in H$ and $v \in V^H$. Then by definition $hv = v$, so $H \subseteq  G_{(V^H)}$.  (5) From (1) and (4) we deduce that $V^{G_{(V^H)}} \subseteq V^H$. Now take $v \in V^H$ and $g \in G_{(V^H)}$, by definition $gv = v$, so $V^H \subseteq  V^{G_{(V^H)}}$ also. \\
(6) Suppose that $K \subseteq G_{(V^H)}$, then $V^K \supseteq V^{G_{(V^H)}} = V^H$ by (1) and (5). Hence $V^K = V^H$ by (1), contradiction with $V^K \subsetneq V^H$.
\end{proof}

\subsection{Induced representation}
\begin{definition} Let $G$ be a finite group and $H$ a subgroup. Consider the set $G / H = \{g_1H, \dots , g_sH \}$, with $g_1=e$. Let $V$ be a complex representation of $H$. The \emph{induced representation} $\mathrm{Ind}_H^G(V)$ is a space $\bigoplus_i g_iV$ (we identify $eV$ with $V$) on which $G$ acts as follows:
$$ g \cdot (\sum_i g_iv_i) = \sum_i g_{\tau(i,g)}(h_{i,g} \cdot v_i)  $$ with $gg_i = g_{\tau(i,g)}h_{i,g}$, $\tau(i,g) \in \{1, \dots , s \}$ and $h_{i,g} \in H$.
\end{definition}

Let $\langle  \cdot,\cdot \rangle_G$ be the usual normalized inner product of finite dimensional complex representations (up to equivalence) of a finite group $G$. 
\begin{lemma}[Frobenius reciprocity, \cite{isa} p62]
Let $G$ be a finite group and $H$ a subgroup. Let $V$ (resp. $W$) be a finite dimensional complex  representation of $G$ (resp. of $H$). Let $\mathrm{Ind}(W)$ be the induction to $G$ and $\mathrm{Res}(V)$ the restriction to $H$, then $\langle V,\mathrm{Ind}(W)\rangle_G = \langle \mathrm{Res}(V),W\rangle_H$. 
\end{lemma}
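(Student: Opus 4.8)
The plan is to reduce everything to character theory, since for finite-dimensional complex representations of a finite group the normalized inner product $\langle\cdot,\cdot\rangle_G$ is computed from characters by $\langle\chi,\chi'\rangle_G=\frac{1}{|G|}\sum_{g\in G}\chi(g)\overline{\chi'(g)}$. Writing $\chi$ for the character of $V$ and $\psi$ for the character of $W$, it suffices to prove the character identity $\langle\chi,\mathrm{Ind}(\psi)\rangle_G=\langle\chi|_H,\psi\rangle_H$, where $\chi|_H$ denotes the character of $\mathrm{Res}(V)$.

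First I would read off the character of the induced representation directly from the explicit model $\bigoplus_i g_iW$ recalled above. In the basis adapted to this decomposition, the operator for $g\in G$ permutes the summands via $i\mapsto\tau(i,g)$ and acts inside them through elements of $H$; hence only the summands fixed by this permutation contribute to the trace, namely those $i$ with $\tau(i,g)=i$, equivalently $g_i^{-1}gg_i\in H$. On such a summand the action is that of $h_{i,g}=g_i^{-1}gg_i$, so I obtain the standard formula
\[
\mathrm{Ind}(\psi)(g)=\sum_{i:\,g_i^{-1}gg_i\in H}\psi(g_i^{-1}gg_i)=\frac{1}{|H|}\sum_{\substack{x\in G\\ x^{-1}gx\in H}}\psi(x^{-1}gx),
\]
the last equality holding because each coset $g_iH$ contributes $|H|$ equal terms, using that $\psi$ is a class function on $H$.

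With this formula in hand the identity follows by a change of variable. Substituting into $\langle\chi,\mathrm{Ind}(\psi)\rangle_G=\frac{1}{|G|}\sum_{g}\chi(g)\overline{\mathrm{Ind}(\psi)(g)}$, exchanging the order of summation, and setting $y=x^{-1}gx$ for each fixed $x$, I would use that $\chi$ is a class function on $G$ (so $\chi(g)=\chi(y)$) to collapse the double sum: as $x$ ranges over $G$ the inner sum no longer depends on $x$, producing a factor $|G|$ that cancels the $\tfrac{1}{|G|}$, and what remains is $\frac{1}{|H|}\sum_{y\in H}\chi(y)\overline{\psi(y)}=\langle\chi|_H,\psi\rangle_H$.

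The only genuine obstacle is the first step, justifying the induced-character formula from the permutation-of-summands description of the action, and in particular checking that the passage from a sum over coset representatives to a normalized sum over all of $G$ is legitimate; everything afterward is a formal reindexing resting on the class-function property and the interchange of finite sums. Since the statement is classical and cited from \cite{isa}, one may alternatively invoke it directly, but the route above is the self-contained one consistent with the explicit construction of $\mathrm{Ind}$ given here.
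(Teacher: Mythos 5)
Your argument is correct. Note, however, that the paper does not prove this lemma at all: it is stated as a black box with a citation to Isaacs's book, so there is no internal proof to compare against. What you supply is the standard self-contained character-theoretic argument, and every step checks out: the trace computation on the explicit model $\bigoplus_i g_iW$ correctly isolates the summands with $\tau(i,g)=i$, i.e.\ $g_i^{-1}gg_i\in H$, giving the induced character formula; the passage from coset representatives to the $\frac{1}{|H|}$-normalized sum over $G$ is justified exactly as you say, since for $x=g_ih$ one has $x^{-1}gx=h^{-1}(g_i^{-1}gg_i)h$ and $\psi$ is a class function on $H$; and the final change of variable $y=x^{-1}gx$ together with the class-function property of $\chi$ on $G$ collapses the double sum with the factor $|G|$ cancelling as claimed. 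The one point worth making explicit is the identification of the representation-theoretic inner product $\langle V,\mathrm{Ind}(W)\rangle_G$ (multiplicity, i.e.\ dimension of the Hom space) with the character inner product $\frac{1}{|G|}\sum_g\chi(g)\overline{\chi'(g)}$; this is standard orthogonality of irreducible characters, but since the paper defines $\langle\cdot,\cdot\rangle_G$ as an inner product of representations, your reduction to characters silently uses it. With that caveat, your proof is a legitimate replacement for the citation, and arguably more in the spirit of the paper's stated goal of being self-contained.
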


\begin{lemma} \label{indexrep}
Let $[H,G]$ be an interval of finite groups. Let $V_1, \dots , V_r$ be  the irreducible complex representations of $G$ (up to equivalence). Then $$|G:H| = \sum_{i=1}^r \dim(V_i)\dim(V_i^H).$$   
\end{lemma}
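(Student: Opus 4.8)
The plan is to prove the dimension formula
$$|G:H| = \sum_{i=1}^r \dim(V_i)\dim(V_i^H)$$
by applying Frobenius reciprocity to the induced representation of the trivial character. First I would observe that the permutation representation of $G$ on the coset space $G/H$ is exactly $\mathrm{Ind}_H^G(\mathbf{1})$, where $\mathbf{1}$ is the trivial representation of $H$; this induced module has dimension equal to $|G:H|$ since it has a basis indexed by the cosets $g_1H,\dots,g_sH$.

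Next I would decompose $\mathrm{Ind}_H^G(\mathbf{1})$ into irreducibles. Writing $\mathrm{Ind}_H^G(\mathbf{1}) \cong \bigoplus_{i=1}^r m_i V_i$ with multiplicities $m_i = \langle V_i, \mathrm{Ind}(\mathbf{1})\rangle_G$, Frobenius reciprocity gives $m_i = \langle \mathrm{Res}(V_i), \mathbf{1}\rangle_H$. The key point is that $\langle \mathrm{Res}(V_i), \mathbf{1}\rangle_H$ counts the multiplicity of the trivial representation in the restriction of $V_i$ to $H$, which is precisely $\dim(V_i^H)$: the space of $H$-fixed vectors is the isotypic component for the trivial character of $H$, so its dimension equals that multiplicity. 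Thus $m_i = \dim(V_i^H)$.

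Finally I would compare dimensions on both sides of the decomposition. Taking dimensions in $\mathrm{Ind}_H^G(\mathbf{1}) \cong \bigoplus_i \dim(V_i^H)\, V_i$ yields
$$|G:H| = \dim\bigl(\mathrm{Ind}_H^G(\mathbf{1})\bigr) = \sum_{i=1}^r \dim(V_i^H)\dim(V_i),$$
which is the claimed formula. I expect no single step to present a serious obstacle; the proof is essentially an assembly of standard facts. The one point meriting care is the identification $\langle \mathrm{Res}(V_i), \mathbf{1}\rangle_H = \dim(V_i^H)$, i.e.\ verifying that the trivial-isotypic component of $\mathrm{Res}(V_i)$ coincides with the fixed-point subspace $V_i^H$ of Definition \ref{fixstab}; this follows from averaging over $H$ (the operator $\frac{1}{|H|}\sum_{h\in H} h$ is the projection onto $V_i^H$) but should be stated explicitly to keep the argument self-contained.
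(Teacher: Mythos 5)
Your proposal is correct and follows essentially the same route as the paper: both compute the dimension of $\mathrm{Ind}_H^G(\mathbf{1})$ in two ways, using Frobenius reciprocity to identify the multiplicity of $V_i$ with $\langle \mathrm{Res}(V_i), \mathbf{1}\rangle_H = \dim(V_i^H)$. Your extra remark about the averaging projection $\frac{1}{|H|}\sum_{h\in H} h$ just makes explicit a step the paper leaves implicit.
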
 
\begin{proof} The following proof is due to Tobias Kildetoft.
Let $1_H^G$ be the trivial representation of $H$ induced to $G$. On one hand, it has dimension $|G:H|$, and on the other hand, this dimension is also $$\sum_i \dim(V_i)\langle V_i,1_H^G \rangle_G = \sum_i\dim(V_i)\langle V_i,1_H \rangle_H = \sum_i\dim(V_i)\dim(V_i^H).$$ The first equality follows from Frobenius reciprocity.
\end{proof} 

\begin{lemma} \label{nonzero}
Let $[H,G]$ be an interval of finite groups with $H \neq G$. Then there is a non-trivial irreducible complex representation $V$ of $G$ such that $V^H \neq 0$.   
\end{lemma}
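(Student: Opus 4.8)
The plan is to extract the statement directly from the dimension identity of Lemma~\ref{indexrep}, by isolating the contribution of the trivial representation. Among the irreducible complex representations $V_1, \dots, V_r$ of $G$ exactly one is the trivial representation; after relabelling I may assume it is $V_1$. Then $\dim(V_1)=1$, and since every element of $H$ acts as the identity on the trivial representation, the fixed-point subspace $V_1^H$ is all of $V_1$, so $\dim(V_1^H)=1$ as well. Hence the term $i=1$ in the sum of Lemma~\ref{indexrep} equals exactly $1$.

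Next I would invoke the hypothesis $H \neq G$, which forces the index to satisfy $|G:H| \geq 2$. Substituting into Lemma~\ref{indexrep} and moving the trivial term to the left gives
$$\sum_{i=2}^{r} \dim(V_i)\dim(V_i^H) \;=\; |G:H|-1 \;\geq\; 1.$$
The summands on the left are products of non-negative integers, so a strictly positive total forces at least one of them to be strictly positive: there is some index $i \geq 2$ with $\dim(V_i)\dim(V_i^H)>0$, and therefore $\dim(V_i^H)>0$, i.e. $V_i^H \neq 0$. Because $i \geq 2$, this $V_i$ is non-trivial, which is exactly the assertion.

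There is essentially no obstacle here: the entire combinatorial content is already packaged in the counting formula of Lemma~\ref{indexrep}, and the argument is just a positivity (pigeonhole) observation after peeling off the known trivial summand. The only points that need a word of justification—that the trivial representation occurs among the $V_i$ and that its $H$-fixed space is the whole space—are immediate from the definitions.
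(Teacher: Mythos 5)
Your argument is correct and is exactly the paper's proof: both apply the counting identity of Lemma~\ref{indexrep}, note that $|G:H|\ge 2$ since $H\neq G$, peel off the contribution $1$ of the trivial representation, and conclude by positivity that some non-trivial $V_i$ has $V_i^H\neq 0$. The paper merely states this more tersely ("the result follows"), so no further comment is needed.
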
 
\begin{proof}
By Lemma \ref{indexrep}, $ \sum_i \dim(V_i)\dim(V_i^H) = |G:H| \ge 2$ because $H \neq G$. The result follows.
\end{proof}

\begin{lemma} \label{indres}
Let $G$ be a finite group, $K$ a subgroup and $U$ an irreducible complex representation of $K$. For any irreducible component $V$ of the induction $W=\mathrm{Ind}_K^G(U)$, there exists $\tilde{U} \subseteq V$ equivalent to $U$ as a representation of $K$.
\end{lemma}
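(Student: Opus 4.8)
The plan is to reduce everything to Frobenius reciprocity together with complete reducibility of complex representations of finite groups. The key observation is that the statement "$U$ embeds into $\mathrm{Res}_K(V)$ as a subrepresentation" is equivalent, by Maschke's theorem, to the purely numerical statement that the multiplicity $\langle \mathrm{Res}_K(V), U \rangle_K$ is nonzero. So I would first translate the hypothesis on $V$ into such a multiplicity statement and then read off the geometric conclusion.

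First I would record that, since $V$ is an irreducible component of $W = \mathrm{Ind}_K^G(U)$, the multiplicity $\langle V, W \rangle_G$ is a positive integer, in particular $\langle V, W \rangle_G \ge 1$. Next I would apply Frobenius reciprocity (the lemma quoted above) with the roles $V$ a representation of $G$ and $U$ a representation of $K$, obtaining
$$ \langle \mathrm{Res}_K(V), U \rangle_K = \langle V, \mathrm{Ind}_K^G(U) \rangle_G = \langle V, W \rangle_G \ge 1. $$
This is the crux: the single application of reciprocity converts the containment of $V$ in the induced module into a containment of $U$ in the restricted module.

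Finally I would invoke complete reducibility: over $\mathbb{C}$, the finite group $K$ acts semisimply, so $\mathrm{Res}_K(V)$ decomposes as a direct sum of irreducible $K$-subrepresentations, and the number of summands equivalent to $U$ equals the multiplicity $\langle \mathrm{Res}_K(V), U \rangle_K$, which we just showed is at least $1$. Hence at least one summand $\tilde{U} \subseteq V$ is a $K$-subrepresentation equivalent to $U$, which is exactly the assertion.

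I do not expect a genuine obstacle here, as the argument is a direct application of results already available in the excerpt. The only point meriting care is the passage from a nonzero multiplicity to an actual invariant subspace $\tilde{U}$; this is legitimate precisely because we work with complex (hence semisimple) representations, so Maschke's theorem guarantees that the abstract multiplicity is realized by a concrete $K$-stable summand of $V$.
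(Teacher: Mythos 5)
Your proposal is correct and is essentially the paper's own proof, which simply says ``Direct by Frobenius reciprocity because $\langle V,W \rangle_G = \langle V,U \rangle_K$''; you have merely spelled out the semisimplicity step that converts the nonzero multiplicity into an actual $K$-stable subspace $\tilde{U} \subseteq V$. No differences in approach worth noting.
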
 
\begin{proof}
Direct by Frobenius reciprocity because $\langle V,W \rangle_G = \langle V,U \rangle_K$.
\end{proof}

\begin{lemma} \label{cliff}
Let $[H,G]$ be an interval of finite groups, $K \in [H,G]$ and $U$ an irreducible complex representation of $K$ such that $U^H \neq 0$. Let $W$ be the induction $\mathrm{Ind}_K^G(U)$. Let $V$ be an irreducible component of $W$. Then
\begin{itemize}
\item[(1)] $G_{(W^H)} \subseteq  K_{(U^H)}$,
\item[(2)] $V^H \neq 0$ and $K_{(V^H)} \subseteq K_{(U^H)}$,
\item[(3)] if $K_{(U^H)}=H$ then $G_{(W^H)} = K_{(V^H)} = H$.
\end{itemize} 
\end{lemma}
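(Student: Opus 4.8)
The plan is to work with the concrete model of the induction, $W = \bigoplus_{i} g_i U$ with $g_1 = e$, so that $U$ is identified with the distinguished summand $eU \subseteq W$. The first thing I would record is that for $h \in H \subseteq K$ the action formula fixes the summand $eU$ and restricts there to the original $H$-action on $U$: since $h g_1 = h e = g_{\tau(1,h)} h_{1,h}$ with $h \in K$, one has $\tau(1,h)=1$ and $h_{1,h}=h$. Consequently $U^H = U \cap W^H$; in particular $U^H \subseteq W^H$, and the hypothesis $U^H \neq 0$ supplies a nonzero $H$-fixed vector of $W$ lying inside $eU$. This identification is the hinge for all three parts.

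For (1), I would take $g \in G_{(W^H)}$ and a nonzero $v \in U^H \subseteq W^H$, so that $gv = v$. Expanding $g\cdot(g_1 v) = g_{\tau(1,g)}(h_{1,g}\cdot v)$ places the image in the summand $g_{\tau(1,g)}U$; comparing with $v \in eU$ across the direct sum (and using that $h_{1,g}$ acts invertibly, so $h_{1,g}\cdot v \neq 0$) forces $\tau(1,g)=1$, whence $g = h_{1,g} \in K$. This shows $G_{(W^H)} \subseteq K$. Once $g \in K$, its action on $eU$ is just the restriction of the $K$-action on $U$, so from $U^H \subseteq W^H$ it fixes $U^H$ pointwise, i.e.\ $g \in K_{(U^H)}$. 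Upgrading ``$g$ fixes $U^H$'' to ``$g \in K$'' by this summand argument is the main obstacle; everything else is essentially functorial, and all the force comes from the directness of the decomposition $W=\bigoplus_i g_iU$.

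For (2), I would invoke Lemma \ref{indres}: as $V$ is an irreducible component of $W$, there is $\tilde{U} \subseteq V$ with $\tilde{U} \cong U$ as $K$-representations. Fixing a $K$-equivariant isomorphism $\phi : U \to \tilde{U}$, equivariance together with $H \subseteq K$ gives $\phi(U^H) = \tilde{U}^H \subseteq V^H$, so $V^H \supseteq \tilde{U}^H \cong U^H \neq 0$ and hence $V^H \neq 0$. For the stabilizer inclusion, any $g \in K_{(V^H)}$ fixes $\tilde{U}^H$ pointwise; transporting through $\phi$ via $\phi(g u)=g\phi(u)$ and the injectivity of $\phi$ shows $g$ fixes $U^H$ pointwise, so $K_{(V^H)} \subseteq K_{(U^H)}$.

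Finally, (3) is a formal consequence of (1), (2), and Lemma \ref{tech}(4). From (1), $G_{(W^H)} \subseteq K_{(U^H)} = H$, while Lemma \ref{tech}(4) gives $H \subseteq G_{(W^H)}$, forcing $G_{(W^H)} = H$. From (2), $K_{(V^H)} \subseteq K_{(U^H)} = H$, and applying Lemma \ref{tech}(4) to the subgroup $H$ of $K$ acting on $V$ yields $H \subseteq K_{(V^H)}$, so $K_{(V^H)} = H$ as well.
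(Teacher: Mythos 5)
Your proof is correct and follows essentially the same route as the paper's: part (1) via the direct-sum decomposition $W=\bigoplus_i g_iU$ forcing $G_{(W^H)}\subseteq K$ (the paper phrases this as $g\cdot U^H\subseteq U \Leftrightarrow g\in K$), part (2) via Lemma \ref{indres} and transport of structure through the $K$-equivariant isomorphism $U\cong\tilde U$, and part (3) formally from (1), (2) and Lemma \ref{tech}(4). You merely spell out a few steps the paper leaves implicit, such as why $\tau(1,g)=1$ and why $K_{(\tilde U^H)}=K_{(U^H)}$.
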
 
\begin{proof}
(1) $U^H \subseteq W^H$, so by Lemma \ref{tech}(2), $G_{(W^H)} \subseteq G_{(U^H)}$. Now, $U^H \neq 0$, so by definition of the induction, $g \cdot U^H  \subseteq U$ if and only if $gK = K$, if and only if $g \in K$. Thus $G_{(U^H)} \subseteq K_{(U^H)}$, and so $G_{(W^H)} \subseteq K_{(U^H)}$. 

\noindent
(2) Take $\tilde{U}$ as for Lemma \ref{indres}. Then $0 \neq \tilde{U}^H \subseteq V^H$ and $K_{(V^H)} \subseteq K_{(\tilde{U}^H)} = K_{(U^H)}$.

\noindent
(3) By (1) and Lemma \ref{tech}(4), $$H \subseteq G_{(W^H)} \subseteq  K_{(U^H)} = H,$$ so $G_{(W^H)} = H$. Idem, by (2) and Lemma \ref{tech}(4), $K_{(V^H)} = H$.
\end{proof}

\subsection{The proof}
\begin{definition} \label{linprimdef}
The group $G$ is called \emph{linearly primitive} if it admits an irreducible complex representation $V$ which is faithful, i.e. $G_{(V)} = \{ e \}$.
\end{definition}

\begin{definition} \label{linprim}
The interval $[H,G]$ is called \emph{linearly primitive} if there is an irreducible complex representation $V$ of $G$ such that $G_{(V^H)} = H$.
\end{definition}

\begin{lemma} \label{maxlin}  A maximal interval $[H,G]$ is linearly primitive.
\end{lemma}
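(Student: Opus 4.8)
The plan is to exploit the fact that a maximal interval $[H,G]$ has $H$ maximal in $G$, so the only subgroups of $G$ containing $H$ are $H$ and $G$ itself. The strategy is thus to produce a single irreducible representation $V$ whose stabilizer $G_{(V^H)}$ lies strictly below $G$; maximality then forces it to equal $H$.

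First I would invoke Lemma \ref{nonzero}: since $[H,G]$ is an interval of finite groups with $H \neq G$ (a maximal subgroup is proper), there exists a non-trivial irreducible complex representation $V$ of $G$ with $V^H \neq 0$. This is the crucial existence input, and it is where the non-triviality of $V$ — guaranteed by Lemma \ref{nonzero} — becomes essential later.

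Next I would locate $G_{(V^H)}$ in the lattice. By Lemma \ref{tech}(4) we always have $H \subseteq G_{(V^H)}$, so $G_{(V^H)}$ is a subgroup lying in $[H,G]$. Because $H$ is maximal, the interval $[H,G]$ consists of exactly the two points $H$ and $G$, so $G_{(V^H)}$ equals either $H$ or $G$. It therefore suffices to rule out $G_{(V^H)} = G$.

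The one point requiring an argument — the "hard part," though it is short — is excluding $G_{(V^H)} = G$. If $G_{(V^H)} = G$, then every element of $G$ fixes $V^H$ pointwise, i.e. $V^H \subseteq V^G$; since $V^H \neq 0$ this gives $V^G \neq 0$. But $V^G$ is a $G$-invariant subspace, so by irreducibility of $V$ we would get $V^G = V$, meaning $G$ acts trivially on $V$, contradicting the non-triviality of $V$ secured in the first step. Hence $G_{(V^H)} \neq G$, forcing $G_{(V^H)} = H$, which is exactly the statement that $[H,G]$ is linearly primitive.
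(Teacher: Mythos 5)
Your proposal is correct and follows essentially the same route as the paper's proof: invoke Lemma \ref{nonzero} to get a non-trivial irreducible $V$ with $V^H \neq 0$, use Lemma \ref{tech}(4) and maximality to reduce to the two cases $G_{(V^H)} \in \{H, G\}$, and rule out $G_{(V^H)} = G$ because it would force $V$ to be trivial by irreducibility. You merely spell out in more detail the step the paper compresses into ``then $V$ must be trivial (by irreducibility)''.
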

\begin{proof}
By Lemma \ref{nonzero}, there is a non-trivial irreducible complex representation $V$ of $G$ with $V^H \neq 0$. By Lemma \ref{tech}(4), $H \subseteq G_{(V^H)}$. If $G_{(V^H)} = G$ then $V$ must be trivial (by irreducibility), so by maximality $G_{(V^H)} = H$.
\end{proof}

\begin{lemma} \label{botlin} 
The interval $[H,G]$ is linearly primitive if its bottom interval $[H,K]$ is so.
\end{lemma}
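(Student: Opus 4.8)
The plan is to induce up the representation that witnesses linear primitivity of the bottom interval, and then to extract a good irreducible component by exploiting the defining feature of the bottom interval: that $K$ is the \emph{join of all the atoms} of $[H,G]$. Concretely, let $[H,K]$ be the bottom interval, so $K = \bigvee_i P_i$ where $P_1,\dots,P_m$ are the atoms of $[H,G]$, and suppose $[H,K]$ is linearly primitive, i.e.\ there is an irreducible complex representation $U$ of $K$ with $K_{(U^H)} = H$. We may assume $H \neq K$, since $K = H$ forces $H = G$ (a lattice strictly above $\hat 0$ has an atom, hence $K > H$), in which case the trivial representation settles the claim. Because $K_{(U^H)} = H \subsetneq K$, we must have $U^H \neq 0$: otherwise $K_{(U^H)} = K$.

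Next I would induce: set $W = \mathrm{Ind}_K^G(U)$ and let $V$ be any irreducible component of $W$. Since $U^H \neq 0$ and $K_{(U^H)} = H$, Lemma \ref{cliff}(3) applies and yields $K_{(V^H)} = H$. The key observation is that $G_{(V^H)} \cap K = K_{(V^H)}$, immediately from the definitions of the two pointwise stabilizers, so that $G_{(V^H)} \cap K = H$. At this stage $V$ is an honest irreducible representation of $G$, and it remains only to promote the equality on $K$ to an equality on all of $G$.

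Finally I would upgrade $G_{(V^H)} \cap K = H$ to $G_{(V^H)} = H$, which is the crux and the single place where the bottom-interval hypothesis is genuinely used. By Lemma \ref{tech}(4), $H \subseteq G_{(V^H)} \in [H,G]$. If this containment were strict, then, the lattice being finite, $G_{(V^H)}$ would contain some atom $P_i$ of $[H,G]$; but every atom satisfies $P_i \subseteq K$ since $K$ is the join of the atoms, whence $P_i \subseteq G_{(V^H)} \cap K = H$, contradicting $P_i \supsetneq H$. Therefore $G_{(V^H)} = H$ and $V$ witnesses linear primitivity of $[H,G]$. The main obstacle is exactly this last step: without the fact that $K$ absorbs every atom of $[H,G]$, the stabilizer equality valid on $K$ would not propagate to the whole group.
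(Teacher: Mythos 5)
Your proof is correct and follows essentially the same route as the paper: induce the witnessing representation $U$ from $K$ up to $G$, apply Lemma \ref{cliff}(3) to an irreducible component $V$ to get $K_{(V^H)}=H$, and then use the fact that every atom of $[H,G]$ lies below $K$ to rule out $G_{(V^H)}\supsetneq H$. The only (harmless) difference is in the last step, where you use the identity $G_{(V^H)}\cap K=K_{(V^H)}$ directly, while the paper routes through $V^{K_i}\subsetneq V^H$ and Lemma \ref{tech}(6); your explicit checks that $K\neq H$ and $U^H\neq 0$ are welcome details the paper leaves implicit.
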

\begin{proof}
Let $[H,K]$ be the bottom interval of $[H,G]$, i.e. $K=\bigvee_i K_i$ with $K_1, \dots, K_n$ the atoms of $[H,G]$. By assumption, there is an irreducible complex representation $U$ of $K$ such that $K_{(U^H)} = H$. Let $V$ be an irreducible component of $\mathrm{Ind}_K^G(U)$. By Lemma \ref{cliff}(3), $K_{(V^H)}=H$. Now if $\exists i$ such that $V^H = V^{K_i}$, then $$ K_i \subseteq  K_{(V^{K_i})}  = K_{(V^H)} = H,$$ contradiction with $H \subsetneq K_i$. So $\forall i$ $V^{K_i} \subsetneq V^H$. By Lemma \ref{tech}(6), we deduce that $K_i \not \subseteq G_{(V^H)}$ $\forall i$, so by minimality $G_{(V^H)} = H$.   
\end{proof}
 
\noindent A dual version of Theorem \ref{ore2} is the following:
\begin{theorem} \label{dualore} A bottom Boolean interval $[H,G]$ is linearly primitive.
\end{theorem}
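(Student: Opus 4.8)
The plan is to first strip the problem down to the genuinely Boolean situation, and then induct on the rank. By Lemma \ref{botlin}, the interval $[H,G]$ is linearly primitive as soon as its bottom interval $[H,K]$ is, and by hypothesis that bottom interval is Boolean; so it suffices to prove that every \emph{Boolean} interval $[H,G]$ is linearly primitive. I would prove this by induction on the rank $n$ of the Boolean lattice $[H,G]\simeq \mathcal{B}_n$, the base case $n\le 1$ being covered by Lemma \ref{maxlin} (a rank $1$ Boolean interval is maximal) and trivially by the trivial representation for $n=0$.

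For the inductive step with $n\ge 2$, I would single out an atom, say $K_n$, and work with the complementary coatom $M:=K_n^{\complement}$, so that $[H,M]\simeq \mathcal{B}_{n-1}$. By the induction hypothesis there is an irreducible complex representation $U$ of $M$ with $M_{(U^H)}=H$; since $H\subsetneq M$ this already forces $U^H\neq 0$, so Lemma \ref{cliff} is available. Setting $W:=\mathrm{Ind}_M^G(U)$, for every irreducible component $V$ of $W$ Lemma \ref{cliff} yields $V^H\neq 0$ together with $M_{(V^H)}=H$ and $G_{(W^H)}=H$ from part (3). Each such $V$ is a candidate witness for linear primitivity of $[H,G]$, and the whole question becomes whether at least one of them actually satisfies $G_{(V^H)}=H$.

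To analyze a single component $V$, I would observe that $G_{(V^H)}\in[H,G]$ by Lemma \ref{tech}(4) and that $M\cap G_{(V^H)}=M_{(V^H)}=H$. In the Boolean lattice $[H,G]$ the only elements meeting the coatom $M$ in $\hat{0}=H$ are $H$ itself and the complementary atom $K_n$; hence $G_{(V^H)}\in\{H,K_n\}$. So every component falls into either the ``good'' case $H$ or the ``bad'' case $K_n$. The hard part, and the only real obstacle, is to exclude that \emph{all} components are bad at once. This is precisely where I expect the interaction between induction and fixed points to do the work: since $W^H=\bigoplus_V V^H$ over the components (all with $V^H\neq 0$) and $G$ acts block-diagonally, one has $G_{(W^H)}=\bigcap_V G_{(V^H)}$, which equals $H$ by the construction. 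If every component satisfied $G_{(V^H)}=K_n$, this intersection would be $K_n\neq H$, a contradiction. Therefore some irreducible component $V$ of $W$ satisfies $G_{(V^H)}=H$, witnessing linear primitivity of $[H,G]$ and closing the induction.
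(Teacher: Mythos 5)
Your proof is correct and follows essentially the same route as the paper: reduce to the Boolean case via Lemma \ref{botlin}, induct on the rank using Lemma \ref{maxlin} as the base case, induce from a coatom, pin down $G_{(V^H)}$ to $\{H, \text{complementary atom}\}$ using the Boolean structure, and rule out the all-bad case via $G_{(W^H)}=\bigwedge_V G_{(V^H)}=H$. The only cosmetic difference is that you name the atom first and pass to its complementary coatom, while the paper starts from the coatom and justifies $G_{(V^H)}\le K^{\complement}$ by an explicit distributivity computation rather than quoting the lattice fact directly.
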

\begin{proof}
By Lemma \ref{botlin}, we can reduced to Boolean intervals. We make an induction on the rank of the Boolean lattice. The rank
one case is handled in Lemma \ref{maxlin}. Assume that it is true at rank $<n$. We will write a proof at rank $n \ge 2$. Let $K$ be a coatom of $[H,G]$. Then $[H,K]$ is Boolean of rank $n-1$, so by assumption, it is linearly primitive, thus there is an irreducible complex representation $U$ of $K$ such that $K_{(U^H)} = H$. For any irreducible component $V$ of $W=\mathrm{Ind}_K^G(U)$, we have $$ K \wedge G_{(V^H)} = K_{(V^H)} = H$$ 
by Lemma \ref{cliff}(3). Thus, by the Boolean structure, $G_{(V^H)} \le K^{\complement}$ because $$K^{\complement} = K^{\complement} \vee H  = K^{\complement} \vee (K \wedge G_{(V^H)}) = (K^{\complement} \vee K) \wedge (K^{\complement} \vee G_{(V^H)}) = K^{\complement} \vee G_{(V^H)}.$$ But $K$ is a coatom of $[H,G]$, so $K^{\complement}$ is an atom and then $$G_{(V^H)} \in \{H,K^{\complement}\}.$$ 
\noindent Assume that every irreducible component $V$ of $W$ satisfies $G_{(V^H)} = K^{\complement}$. There are irreducible complex representations $V_1, \dots , V_r$ of $G$ such that $W = \bigoplus_i V_i$, then by Lemma \ref{cliff}(3)
$$H = G_{(W^H)} = \bigwedge_i G_{(V_i^H)}  =  K^{\complement},$$ 
thus $H = K^{\complement}$, contradiction. So there is an irreducible component $V$ of $W$ such that $G_{(V^H)} = H $, and the result follows. 
\end{proof} 
\noindent Theorem \ref{dualoreintro} follows directly from Theorem \ref{dualore} and Lemma \ref{topBn}.
\section{A bridge between combinatorics and representations} 
\noindent We restate Corollary \ref{corointro} by using the notation $H^i$ instead of $H_i$ because it will be more convenient for the proof.
\begin{corollary} \label{cor}
The minimal number of irreducible components for a faithful complex representation of a finite group $G$ is at most the minimal length $\ell$ for an ordered chain of subgroups $$\{e\}=H^0 < H^1 < \dots < H^{\ell} = G$$ such that $[H^i,H^{i+1}]$ is distributive (or better, bottom Boolean). 
\end{corollary}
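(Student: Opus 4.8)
The plan is to turn each one-step inclusion of the chain into a single irreducible constituent of a faithful representation of $G$, so that a chain of length $\ell$ produces a faithful representation with at most $\ell$ irreducible components. Fix a chain $\{e\}=H^0 < H^1 < \dots < H^{\ell}=G$ realizing the minimal length, with each $[H^i,H^{i+1}]$ distributive. By Lemma \ref{topBn} each such interval is bottom Boolean, so by Theorem \ref{dualore} it is linearly primitive: for every $i$ there is an irreducible complex representation $U_i$ of $H^{i+1}$ with $(H^{i+1})_{(U_i^{H^i})}=H^i$. I would then let $W_i$ be an irreducible component of $\mathrm{Ind}_{H^{i+1}}^G(U_i)$ and propose $W=\bigoplus_{i=0}^{\ell-1} W_i$ as the desired representation; it visibly has at most $\ell$ irreducible components, so everything reduces to proving that $W$ is faithful, i.e. $G_{(W)}=\{e\}$.

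The heart of the argument is a local kernel bound, namely $(H^{i+1})_{(W_i)} \subseteq H^i$ for each $i$. Since $U_i^{H^i} \subseteq U_i$, Lemma \ref{tech}(2) (applied inside the group $H^{i+1}$) gives $(H^{i+1})_{(U_i)} \subseteq (H^{i+1})_{(U_i^{H^i})} = H^i$. By Lemma \ref{indres} some subrepresentation $\tilde U_i \subseteq W_i$ is equivalent to $U_i$ as a representation of $H^{i+1}$, and equivalent representations share the same pointwise stabilizer; hence any element of $H^{i+1}$ fixing $W_i$ pointwise fixes $\tilde U_i$, and therefore $U_i$, pointwise. Thus $(H^{i+1})_{(W_i)} \subseteq (H^{i+1})_{(U_i)} \subseteq H^i$, which I would record in the equivalent form $G_{(W_i)} \cap H^{i+1} \subseteq H^i$.

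Faithfulness then follows by a downward induction along the chain. Let $g \in G_{(W)} = \bigcap_i G_{(W_i)}$. Taking $i=\ell-1$, where $H^{i+1}=G$, the local bound reads $G_{(W_{\ell-1})} \subseteq H^{\ell-1}$, so $g \in H^{\ell-1}$. Inductively, if $g \in H^{i+1}$ then $g \in G_{(W_i)} \cap H^{i+1} \subseteq H^i$, and after $\ell$ steps $g \in H^0 = \{e\}$; hence $W$ is faithful. I expect the genuine obstacle to be precisely this passage from the local data to the global conclusion: linear primitivity of $[H^i,H^{i+1}]$ is a statement taking place inside $H^{i+1}$, and what bridges it to a statement about $G$ is the induction functor together with Frobenius reciprocity in the form of Lemma \ref{indres}. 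The telescoping of the bounds $G_{(W_i)}\cap H^{i+1}\subseteq H^i$ is what makes the chain collapse to the identity, so the care needed is in checking that inducing and then restricting faithfully carries the $H^i$-level kernel control from each $U_i$ to its chosen component $W_i$.
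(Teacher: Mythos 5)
Your proof is correct and follows essentially the same route as the paper: one application of Theorem \ref{dualore} per link of the chain, induction up to $G$ with an irreducible component chosen via Frobenius reciprocity, and a telescoping of the resulting kernel bounds $G_{(W_i)}\cap H^{i+1}\subseteq H^i$ down to $\{e\}$. The only (harmless) difference is that you re-derive this local bound directly from Lemma \ref{indres} and Lemma \ref{tech}(2) instead of quoting Lemma \ref{cliff}(3), which gives the slightly stronger statement $H^{i+1}_{(W_i^{H^i})}=H^i$ used in the paper's version of the same telescoping argument.
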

\begin{proof}
By Theorem \ref{dualore} and Lemma \ref{cliff}(3), there are irreducible complex representations $V_1, \dots , V_{\ell}$ of $G$ such that $H^{i}_{(V_i^{H^{i-1}})} = H^{i-1}$. Take $W=\bigoplus_{i=1}^{\ell} V_i$. Then  $$\ker(\pi_W) = \bigwedge_{i=1}^{\ell}\ker(\pi_{V_i}) = \bigwedge_{i=1}^{\ell} G_{(V_i)} \le  \bigwedge_{i=1}^{\ell} G_{(V_i^{H^{i-1}})} = (\bigwedge_{i=1}^{\ell-1} G_{(V_i^{H^{i-1}})}) \wedge H^{\ell - 1} $$ $$ = \bigwedge_{i=1}^{\ell-1} H_{(V_i^{H^{i-1}})}^{\ell - 1} = \cdots = \bigwedge_{i=1}^{\ell-s} H^{\ell - s}_{(V_i^{H^{i-1}})} = \cdots = H^0 = \{e\}.$$ 
So $W$ is faithful with $\ell$ irreducible components. The result follows.
\end{proof}
\noindent Note that this upper bound involves the subgroup lattice only.
\begin{remark} \label{mod}  The modular maximal-cyclic group $M_4(2)$ and the abelian group $C_8 \times C_2$ have the same subgroup lattice, but the first is linearly primitive whereas the second is not. So the minimal number in Corollary \ref{cor} cannot be determined by the subgroup lattice only.
\end{remark} 
\begin{lemma} \label{linprimgrp} For $H$ core-free, $G$ is linearly primitive if $[H,G]$ is so. 
\end{lemma}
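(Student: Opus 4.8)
The plan is to reuse the very representation that witnesses linear primitivity of the interval, and to show that the core-free hypothesis upgrades it to a faithful one. Since $[H,G]$ is linearly primitive (Definition \ref{linprim}), there is an irreducible complex representation $V$ of $G$ with $G_{(V^H)} = H$. I claim this same $V$ is faithful, which by Definition \ref{linprimdef} is exactly what is needed to conclude that $G$ is linearly primitive.

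First I would note that $V^H \subseteq V$, so Lemma \ref{tech}(2) yields the inclusion $G_{(V)} \subseteq G_{(V^H)} = H$. Thus the set $G_{(V)}$ of elements acting trivially on all of $V$ is contained in $H$.

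The essential point is then that $G_{(V)}$ is a \emph{normal} subgroup of $G$: it is precisely the kernel of the homomorphism $G \to \mathrm{GL}(V)$ defining the representation, hence normal. Combining this with the previous paragraph, $G_{(V)}$ is a normal subgroup of $G$ contained in $H$, so it lies inside the core of $H$ (the largest normal subgroup of $G$ contained in $H$). Since $H$ is core-free, this core is trivial, forcing $G_{(V)} = \{e\}$. Therefore $V$ is an irreducible faithful representation and $G$ is linearly primitive.

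The argument carries no serious obstacle; the one thing worth making explicit is the identification of the pointwise stabilizer $G_{(V)}$ with the kernel of the representation $V$, which is immediate from Definition \ref{fixstab} and is what guarantees normality so that the core-free hypothesis can be applied.
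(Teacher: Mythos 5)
Your proposal is correct and follows essentially the same argument as the paper: take the witness $V$ with $G_{(V^H)}=H$, use $V^H\subseteq V$ and Lemma \ref{tech}(2) to get $G_{(V)}\subseteq H$, identify $G_{(V)}$ with the (normal) kernel of the representation, and invoke core-freeness to conclude that $V$ is faithful.
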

\begin{proof} Let $V$ be an irreducible complex representation of $G$ such that $G_{(V^H)}=H$. Now, $V^{H} \subseteq V$ so $G_{(V)} \subseteq G_{(V^H)}$, but $\ker(\pi_V) =  G_{(V)}$, it follows that $\ker(\pi_V) \subseteq H$; but $H$ is a core-free subgroup of $G$, and $\ker(\pi_V)$ a normal subgroup of $G$, so $\ker(\pi_V)= \{ e \}$. \end{proof}
\noindent By Lemma \ref{linprimgrp}, we can improve the bound of Corollary \ref{cor} by taking for $H^0$ any core-free subgroup of $H^1$ (instead of just $\{e\}$), and we can wonder whether it is the right answer in general, in particular:
\begin{question}
Is a finite group $G$ linearly primitive if and only if there is a core-free subgroup $H$ with $[H,G]$ bottom Boolean?
\end{question}
\noindent It is true for any finite simple group $S$, because any proper subgroup $M$ is core-free, and by choosing it maximal, $[M,S]$ is Boolean of rank one. Moreover, we have checked by GAP \cite{gap} that it is also true for any finite group $G$ of order less than $512$.
\begin{remark} A normal subgroup $N \trianglelefteq G$ is a modular element in $\mathcal{L}(G)$, see \cite[p43]{sch}. If $H$ is a subgroup of $G$ such that $\forall K \in (1,H]$, $K$ is not modular in $\mathcal{L}(G)$, then $H$ is core-free. It follows that we can also improve the bound of Corollary \ref{cor} completely combinatorially. Nevertheless, by Remark \ref{mod}, it cannot be the right answer in general. 
\end{remark} 

\section{Acknowledgments} 
\noindent This work is supported by the Institute of Mathematical Sciences, Chennai. The author thanks an anonymous referee who suggested to write this group-theoretic proof of dual Ore's theorem.

\begin{bibdiv}
\begin{biblist}
\bib{bp}{article}{
   author={Balodi, Mamta},
   author={Palcoux, Sebastien},
   title={On Boolean intervals of finite groups},
   journal={J. Comb. Theory, Ser. A}
   volume={157}
   pages={49-69},
   date={2018},
   doi={10.1016/j.jcta.2018.02.004}, 
}   
\bib{gap}{misc}{
   author={The GAP~Group},
   title={GAP -- Groups, Algorithms, and Programming, Version 4.8.3},
   date={2016}, 
   note={URL: http://www.gap-system.org},    
}
\bib{isa}{book}{
   author={Isaacs, I. Martin},
   title={Character theory of finite groups},
   publisher={Dover Publications, Inc., New York},
   date={1994},
   pages={xii+303},
   isbn={0-486-68014-2},
   review={\MR{1280461}},
}
\bib{or}{article}{
   author={Ore, \O ystein},
   title={Structures and group theory. II},
   journal={Duke Math. J.},
   volume={4},
   date={1938},
   number={2},
   pages={247--269},
   issn={0012-7094},
   review={\MR{1546048}},
   doi={10.1215/S0012-7094-38-00419-3},
}
\bib{p}{article}{
   author={Palcoux, Sebastien},
   title={Ore's theorem on subfactor planar algebras},
   pages={14pp},
   note={Under review. arXiv:1704.00745}
}
\bib{sch}{book}{
    author = {Schmidt, Roland},
    title = {Subgroup lattices of groups},
    isbn = {3-11-011213-2},
    pages = {xv + 572},
    date = {1994},
    series = {De Gruyter Expositions in Mathematics},
    volume={14},
}
\bib{sta}{book}{
   author={Stanley, Richard P.},
   title={Enumerative combinatorics. Volume 1},
   series={Cambridge Studies in Advanced Mathematics},
   volume={49},
   edition={2},
   publisher={Cambridge University Press},
   date={2012},
   pages={xiv+626},
   isbn={978-1-107-60262-5},
}
\end{biblist}
\end{bibdiv}
\end{document}